\newtheorem{thm}{Theorem}[section]
\newtheorem{lem}[thm]{Lemma}
\newtheorem{prop}[thm]{Proposition}
\newtheorem{cor}[thm]{Corollary}
\theoremstyle{definition}
\newtheorem{defn}[thm]{Definition}
\newtheorem{exa}[thm]{Example}
\theoremstyle{remark}
\newtheorem{rem}[thm]{Remark}
\numberwithin{equation}{section}
\begin{document}

\setcounter{page}{1}


\title[M-central Armendariz rings]{Central Armendariz rings relative to a monoid}
\author[Z. Sharifi ]{Z. Sharifi}
\thanks{{\scriptsize
\hskip -0.4 true cm MSC(2010):16S36;
\newline Keywords: Monoid rings, Central Armendariz rings,
$M$-central Armendariz rings, Baer rings, p.p-rings.\\
 }}
\begin{abstract} In this paper, the notion of central Armendariz rings relative to a monoid is introduced which is a
generalization of central Armendariz rings and investigate their
properties. It is shown that if $R$ is central reduced, then $R$
is $M$-central Armendariz for a u.p.-monoid $M$. For a monoid $M$
and ring $R$, we prove if $R$ is an $M$-central Armendariz, then
either $R$ is commutative or $M$ is cancellative. Various
examples which illustrate and delimit the results of this paper
are provided.

\end{abstract}

\maketitle
\section{Introduction}
All rings considered here are associative and unitary.  Rege and
Chhawchharia \cite{rege} introduced the notion of an Armendariz
ring. A ring $R$ is called {\it Armendariz} if whenever
polynomials $f(x) = a_0 + a_1x + ... + a_nx^n, g(x) = b_0 + b_1x +
... + b_mx^m \in  R[x]$ satisfy $f(x)g(x) = 0$, then $a_ib_j = 0$
for each $0\leq i\leq n$, $0\leq  j\leq m$. The name ``Armendariz
ring " was chosen because Armendariz \cite[Lemma 1]{arm} had shown
that a reduced ring (i.e, a ring without nonzero nilpotent
elements) satisfies this condition. Some properties of Armendariz
rings and theire generalizations  have been studied in
\cite{rege},  \cite{arm},  \cite{and}, \cite{ghks}, \cite{Zh}  and
\cite{lee1}. In \cite{L2}, Liu studied a generalization of
Armendariz rings, which is called $M$-Armendariz rings, where $M$
is a monoid. A ring R is called {\it $M$-Armendariz} (or
Armendariz relative to $M$) if whenever $\alpha = a_1g_1 + ...+
a_ng_n, \beta = b_1h_1 + ... + b_mh_m\in R[M]$, satisfy
$\alpha\beta = 0$, then $a_ib_j = 0$ for each $i, j$. Some
generalizations of Armendariz rings relative to a monoid can be
seen in \cite{Hashemi}, \cite{Hashe},  \cite{Hash}, \cite{Wang}
and \cite{Z}.

 According to Agayev, et.al \cite{N2}, a ring R is called
{\it central Armendariz} if whenever two polynomials $f(x) = a_0
+ a_1x + ... + a_nx^n, g(x) = b_0 + b_1x + ... + b_mx^m \in R[x]$
satisfy $f(x)g(x)=0$ then $a_ib_j \in C(R)$ for all $i,j$. In
this paper, we introduce the notion of $M$-central Armendariz
rings which are a common generalization of $M$-Armendariz rings
and central Armendariz rings. It is easy to see that the concept
of $M$-central Armendariz rings is related not only to the ring
$R$ but also to the monoid $M$. It is shown that if $R$ is
central reduced, then $R$ is $M$-central Armendariz for a
u.p.-monoid $M$. For a monoid $M$ and ring $R$, we prove if $R$ is
an $M$-central Armendariz, then either $R$ is commutative or $M$
is cancellative. It is clear that every $M$-Armendariz ring is
$M$-central. It is shown that the converse is not true in general
and the converse is hold if $R$ is a p.p.-ring and $M$  a strictly
totally ordered monoid. We end this paper with some applications
of $M$-central Armendariz rings to   show  there is a strong
connection between Baer and  p.p.-rings with their monoid  rings.

\section{M-central Armendariz rings}

In this section, central $M$-Armendariz rings  are introduced as
a generalization of $M$-Armendariz rings.

\begin{defn} Let $M$ be a monoid. A ring $R$ is called an
$M$-central Armendariz ring, if whenever elements
$\alpha=a_1g_1+a_2g_2+...+a_mg_m\in R[M]$ and
$\beta=b_1h_1+b_2h_2+...+b_nh_n\in R[M]$ satisfies
$\alpha\beta=0$, then $a_ib_j\in C(R)$ for each $1\leq i\leq m$
and $1\leq j\leq n$.
\end{defn}

In the following for a monoid $M$, $e$ always stands  for the
identity element of $M$.

\begin{rem} (1) If $M=\{e\}$, then every ring is $M$-central
Armendriz.

(2) Commutative rings are $M$-central Armendariz for each monoid
$M$.

(3) If $S$ is a semigroup with multiplication $st=0$ for each
$s,t\in S$ and $M=S^1$, then any noncommutative ring is not
$M$-central Armendariz.

(4) Let $M=(\mathbb{N}\cup\{0\},+)$. Then a ring $R$ is
$M$-central Armendariz if and only if $R$ is central Armendariz.

(5) Every $M$-Armendariz ring is $M$-central Armendariz. But the
converse is not true (see Example \ref{E2}).
\end{rem}

Recall that a ring $R$ is called central reduced if every
nilpotent element of $R$ is central \cite{Ung}. Let $P(R)$ denote
the prime radical and $Nil(R)$ the set of all nilpotent elements
of the ring $R$. The ring $R$ is called 2-primal if $P(R) =
Nil(R)$ (See namely \cite{Hi} and \cite{HJP}). Also a ring $R$ is
called nil-Armendariz relative to a monoid $M$ if whenever
elements $\alpha=a_1g_1+a_2g_2+...+a_mg_m\in R[M]$ and
$\beta=b_1h_1+b_2h_2+...+b_nh_n\in R[M]$ satisfies
$\alpha\beta\in Nil(R)[M]$, then $a_ib_j\in Nil(R)$ for each
$1\leq i\leq m$ and $1\leq j\leq n$ \cite{Hash}.

Recall that a monoid $M$ is called a u.p.-monoid (unique product
monoid) if for any two nonempty finite subsets $A, B \subseteq  M$
there exists an element $g \in M$ uniquely presented in the form
$ab$ where $a \in A$ and $b\in B$ (see \cite{Bir}).

\begin{thm} Let $M$ be a u.p.-monoid and $R$ a central reduced
ring. Then $R$ is a $M$-central Armendariz ring.
\end{thm}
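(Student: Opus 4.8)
The plan is to reduce the statement to the already-known reduced-ring case rather than argue with the u.p.-property directly. Concretely, I would invoke Liu's foundational result from \cite{L2}, that a reduced ring is $M$-Armendariz for every u.p.-monoid $M$, and transport it along the canonical surjection $R \to R/Nil(R)$. The crux is the structural observation that, for a central reduced ring $R$, the set $N = Nil(R)$ is a two-sided ideal satisfying $N \subseteq C(R)$ and with $R/N$ reduced.

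First I would establish this structural lemma. Because $R$ is central reduced, every nilpotent element lies in $C(R)$, so any two nilpotents commute. Hence if $x, y \in N$ with $x^p = y^q = 0$, the binomial expansion (valid since $x$ and $y$ commute) gives $(x+y)^{p+q-1} = 0$, so $x + y \in N$; and for any $r \in R$ the centrality of $x$ yields $(rx)^k = r^k x^k$, so $rx, xr \in N$. Thus $N$ is an ideal. The quotient is reduced: if $\overline{a}^{\,2} = 0$ in $R/N$ then $a^2 \in N$, whence $a \in N$ and $\overline{a} = 0$.

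Next I would pass to monoid rings. Writing $\overline{(\cdot)}$ for reduction modulo $N$, the set $N[M]$ is an ideal of $R[M]$ with $R[M]/N[M] \cong (R/N)[M]$. Given $\alpha = \sum_i a_i g_i$ and $\beta = \sum_j b_j h_j$ in $R[M]$ with $\alpha\beta = 0$, the images satisfy $\overline{\alpha}\,\overline{\beta} = 0$ in $(R/N)[M]$. Since $R/N$ is reduced and $M$ is a u.p.-monoid, Liu's theorem \cite{L2} makes $R/N$ an $M$-Armendariz ring, so $\overline{a}_i\,\overline{b}_j = 0$ for all $i,j$; equivalently $a_i b_j \in N = Nil(R)$. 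Because $N \subseteq C(R)$, we conclude $a_i b_j \in C(R)$ for all $i,j$, which is exactly the $M$-central Armendariz condition.

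The main obstacle is the structural lemma, not the monoid-ring bookkeeping: everything hinges on nilpotents being central, since this is what forces them to commute and hence makes $Nil(R)$ closed under addition — without centrality, $Nil(R)$ need not be an ideal, the quotient need not be reduced, and the whole reduction collapses. A direct alternative, mimicking Liu's inductive u.p.-argument and replacing the implication ``reduced $\Rightarrow$ semicommutative'' by the fact that $ab=0$ forces $arb$ to be central and nilpotent in a central reduced ring, would also work, but it is more delicate: one only recovers centrality (not vanishing) of the extracted coefficient products, which complicates the peeling step in the induction on the supports. The quotient route sidesteps this difficulty entirely.
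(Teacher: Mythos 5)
Your proof is correct, and at the strategic level it is the same as the paper's: both arguments funnel through the single key point that every coefficient product $a_ib_j$ lands in $Nil(R)$, after which centrality of nilpotents (the definition of central reduced) finishes the job. The difference lies in how that nilpotence is certified. The paper invokes two black boxes: central reduced rings are 2-primal \cite[Theorem 2.15]{Ung}, so $Nil(R)$ is an ideal, and any ring whose nilpotents form an ideal is nil-Armendariz relative to a u.p.-monoid \cite[Proposition 2.1]{Hash}. You instead prove the structural facts by hand --- $Nil(R)$ is an ideal contained in $C(R)$ with reduced quotient (your binomial and centrality computations are fine) --- and then pass to $(R/Nil(R))[M]\cong R[M]/Nil(R)[M]$ and apply Liu's theorem \cite[Proposition 1.1]{L2} that reduced rings are $M$-Armendariz for u.p.-monoids. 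In effect you have reproved, in the case at hand, the content of Hashemi's nil-Armendariz proposition that the paper quotes as a citation; the quotient-plus-Liu argument is the standard way such results are obtained. What your route buys is self-containedness: the only external input is Liu's reduced-ring theorem, with no appeal to the 2-primal or nil-Armendariz machinery. What the paper's route buys is brevity, and it records the slightly stronger fact that $R$ is nil-Armendariz relative to $M$ --- though your argument delivers that too, since $\alpha\beta\in Nil(R)[M]$ already suffices to make $\overline{\alpha}\,\overline{\beta}=0$ in the quotient.
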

\begin{proof} If $R$ is a central reduced ring, then $R$ is
2-primal by \cite[Theorem 2.15]{Ung}. Hence $Nil(R)$ is an ideal
of $R$. Hence by \cite[Proposition 2.1]{Hash}, $R$ is
nil-Armendariz relative to $M$. If
$\alpha=a_1g_1+a_2g_2+...+a_mg_m\in R[M]$ and
$\beta=b_1h_1+b_2h_2+...+b_nh_n\in R[M]$ satisfies
$\alpha\beta=0$, then $a_ib_j\in Nil(R)$ for each $i,j$. As $R$
is central reduced, $a_ib_j\in C(R)$ for each $i,j$.
\end{proof}

\begin{thm} Let $M$ be a monoid and $R$  a ring. If $R$ is an
$M$-central Armendariz, then either $R$ is commutative or $M$ is
cancellative.
\end{thm}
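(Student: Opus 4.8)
The plan is to prove the contrapositive form of the stated dichotomy: assuming $M$ is \emph{not} cancellative, I will show that $R$ must be commutative. Recall that $M$ fails to be cancellative precisely when left or right cancellation breaks down, i.e.\ when there exist $g, h_1, h_2 \in M$ with $h_1 \neq h_2$ and either $g h_1 = g h_2$ or $h_1 g = h_2 g$. So it suffices to derive commutativity of $R$ from each of these two failure patterns.

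First I would treat the left-failure case $g h_1 = g h_2$ with $h_1 \neq h_2$. The idea is to manufacture, for arbitrary $r, s \in R$, a pair of elements of $R[M]$ whose product vanishes and whose coefficient products detect $rs$. Concretely, set $\alpha = rg$ and $\beta = s h_1 - s h_2 = s h_1 + (-s) h_2$. Since $h_1 \neq h_2$, these are genuine (one-term and two-term) representations in $R[M]$, and because $g h_1 = g h_2$ the product telescopes:
\[
\alpha\beta = rs\,(gh_1) + r(-s)\,(gh_2) = rs\,(gh_1) - rs\,(gh_1) = 0.
\]
Now the $M$-central Armendariz hypothesis applies and yields $a_1 b_1 = rs \in C(R)$ for every choice of $r, s \in R$.

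The punch line is to specialize $r = 1$: since $R$ is unital, $rs = s \in C(R)$ for all $s \in R$, so every element of $R$ is central and $R$ is commutative. The right-failure case $h_1 g = h_2 g$ is entirely symmetric — one takes $\alpha = s h_1 - s h_2$ and $\beta = rg$, obtains $\alpha\beta = 0$ from $h_1 g = h_2 g$, and again reads off $sr \in C(R)$ for all $r,s$, then specializes $r = 1$ to conclude commutativity.

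I do not expect a serious obstacle here; the only points requiring care are (i) checking that the chosen $\alpha, \beta$ are written with distinct monoid elements, so that the central Armendariz conclusion legitimately applies to their coefficients, and (ii) verifying that the cross terms in $\alpha\beta$ really cancel, which is exactly where the non-cancellation identity $g h_1 = g h_2$ (resp.\ $h_1 g = h_2 g$) is used. The genuine insight is simply the choice of witnesses: the failure of cancellation is precisely what produces a nontrivial zero product, and inserting the unit $1$ as one of the coefficients collapses the centrality of all products $rs$ down to the centrality of every ring element.
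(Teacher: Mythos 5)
Your proof is correct and takes essentially the same approach as the paper: the paper picks $m,g,h$ with $mg=mh$, $g\neq h$, and applies the $M$-central Armendariz condition to $(rm)(1g-1h)=0$, which is exactly your construction with $s=1$ built in from the start, giving $r\in C(R)$ directly. If anything, your version is slightly more careful, since you explicitly treat the right-cancellation failure $h_1g = h_2g$ by the symmetric argument, a case the paper's proof passes over in silence.
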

\begin{proof} Suppose $M$ is not cancellative. Hence $m,g,h\in M$
are such that $mg=mh$ and $g\neq h$. Then for any $r\in R$ we
have $(rm)(1g-1h)=0$. As $R$ is $M$-central Armendariz, $r\in
C(R)$. Hence $R$ is commutative.
\end{proof}

\begin{prop}\label{P1} For a ring $R$ and monoid $M$ with $|M|\geq 2$, the
following are equivalent:

(1) $R$ is $M$-central Armendariz;

(2) $R$ is Abelian, $fR$ and $(1-f)R$ are $M$-central Armendariz
for any idempotent $f\in R$;

(3) There is a central idempotent $f\in R$ such that $fR$ and
$(1-f)R$ are $M$-central Armendariz.
\end{prop}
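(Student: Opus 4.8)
The plan is to establish the cycle $(1)\Rightarrow(2)\Rightarrow(3)\Rightarrow(1)$. The unifying device is that whenever $f$ is a \emph{central} idempotent we have a ring decomposition $R\cong fR\times(1-f)R$ together with the induced monoid–ring decomposition $R[M]\cong(fR)[M]\times((1-f)R)[M]$, the multiplications agreeing coefficientwise. I will also use the two easy facts that, for a central idempotent $f$, $C(R)\cap fR\subseteq C(fR)$ (the direction needed in $(1)\Rightarrow(2)$) and $C(fR)\subseteq C(R)$ (the direction needed in $(3)\Rightarrow(1)$); the latter is a short computation exploiting that $f$ commutes with every element of $R$.

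For $(1)\Rightarrow(2)$ the substantive point is Abelianness. Given any idempotent $f\in R$ and any $a\in R$, set $b=fa(1-f)$, so that $b^2=0$, $fb=b=b(1-f)$ and $bf=0=(1-f)b$. Since $|M|\geq 2$, fix a monoid element $g\neq e$ and form
\[
\alpha=f\cdot e-b\cdot g,\qquad \beta=(1-f)\cdot e+b\cdot g\in R[M].
\]
A direct check shows $\alpha\beta=0$: the coefficient at $e$ is $f(1-f)=0$, the coefficient at $g$ is $fb-b(1-f)=b-b=0$, and the coefficient at $g^2$ is $-b^2=0$. The $M$-central Armendariz hypothesis then forces the coefficient product $f\cdot b=b$ into $C(R)$; since $b$ is central we get $b=fb=bf=0$, i.e. $fa(1-f)=0$. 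The symmetric choice $b'=(1-f)af$ (with $\alpha'=(1-f)e-b'g$, $\beta'=fe+b'g$) gives $(1-f)af=0$, and the two relations yield $fa=faf=af$, so every idempotent is central and $R$ is Abelian.

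Now that $R$ is Abelian, each idempotent $f$ is central, so $fR$ is a ring with $(fR)[M]\subseteq R[M]$ and matching multiplication. If a product of two elements of $(fR)[M]$ vanishes, it vanishes in $R[M]$, so the coefficient products lie in $C(R)$; being also in $fR$ they lie in $C(R)\cap fR\subseteq C(fR)$, proving $fR$ (and likewise $(1-f)R$) is $M$-central Armendariz. This gives (2). The implication $(2)\Rightarrow(3)$ is then immediate: taking the central idempotent $f=e_R=1$, clause (2) already asserts that $fR=R$ and $(1-f)R=0$ are $M$-central Armendariz.

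For $(3)\Rightarrow(1)$, let $f$ be the given central idempotent and suppose $\alpha\beta=0$ in $R[M]$ with $\alpha=\sum a_ig_i$, $\beta=\sum b_jh_j$. Multiplying by $f$ and by $1-f$ and using that these are central idempotents gives $(f\alpha)(f\beta)=0$ in $(fR)[M]$ and $((1-f)\alpha)((1-f)\beta)=0$ in $((1-f)R)[M]$. Applying the hypothesis in each corner yields $fa_ib_j\in C(fR)$ and $(1-f)a_ib_j\in C((1-f)R)$; since $C(fR),C((1-f)R)\subseteq C(R)$, their sum $a_ib_j=fa_ib_j+(1-f)a_ib_j$ lies in $C(R)$, so $R$ is $M$-central Armendariz. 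I expect the main obstacle to be the Abelianness step of $(1)\Rightarrow(2)$—choosing the precise pair $\alpha,\beta$ of monoid polynomials with $\alpha\beta=0$ so that the forced central coefficient product collapses $b$ to $0$—together with the routine but essential bookkeeping relating $C(fR)$ to $C(R)\cap fR$.
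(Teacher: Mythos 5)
Your proof is correct and follows essentially the same route as the paper: the same annihilating pair built from an idempotent $f$ and $b=fa(1-f)$ with a monoid element $g\neq e$ to get Abelianness in $(1)\Rightarrow(2)$, the same trivial step for $(2)\Rightarrow(3)$, and the same Peirce-type decomposition $a_ib_j=fa_ib_j+(1-f)a_ib_j$ for $(3)\Rightarrow(1)$. In fact your sign choice $\beta=(1-f)e+bg$ is the right one --- the paper's displayed product $(fe-fr(1-f)g)((1-f)e-fr(1-f)g)$ actually equals $-2fr(1-f)g$ rather than $0$, an evident sign typo --- and you also spell out the inclusions $C(fR)\subseteq C(R)$ and $C(R)\cap fR\subseteq C(fR)$ that the paper leaves implicit.
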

\begin{proof} $(1)\Rightarrow (2)$ Clearly, subrings of any
$M$-central Armendariz ring are $M$-central Armendariz. It
suffices to show $R$ is Abelian. Let $f=f^2\in R$. Let $e$ be
identity of $M$ and $e\neq g\in M$. Then
$(fe-fr(1-f)g)((1-f)e-fr(1-f)g)=0$ implies that $fr(1-f)$ is
central, because $R$ is $M$-central Armendariz. Hence
$fr(1-f)=0$. Similarly, $(1-f)rf=0$. Therefore $f$ is central.

$(2)\Rightarrow (3)$ is clear.

$(3)\Rightarrow (1)$ Let $f$ be a central idempotent of $R$ and
$\alpha=a_1g_1+a_2g_2+...+a_mg_m,~\beta=b_1h_1+b_2h_2+...+b_nh_n\in
R[M]$ satisfies $\alpha\beta=0$. Clearly, we can prove $fa_ifb_j$
and $(1-f)a_i(1-f)b_j$ are central for each $1\leq i\leq m$ and
$1\leq j\leq n$. As $R=fR\oplus (1-f)R$ and $f$ is central,
$a_ib_j=fa_ib_j+(1-f)a_ib_j$ is central for each $1\leq i\leq m$
and $1\leq j\leq n$. Thus $R$ is $M$-central Armendariz.
\end{proof}

\begin{cor}\label{c1} \cite[Proposition 3.2]{L2} Every $M$-Armendariz ring with $|M|\geq 2$ is Abelian.
\end{cor}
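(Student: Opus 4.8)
The plan is to deduce the statement from the results already in hand rather than to argue from scratch. By part (5) of the Remark, every $M$-Armendariz ring is $M$-central Armendariz; since $|M|\ge 2$, the implication $(1)\Rightarrow(2)$ of Proposition \ref{P1} then guarantees that $R$ is Abelian, which is exactly the assertion. This yields a one-line derivation, and it is the route I would record as the actual proof.

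For completeness I would also sketch the direct argument, which is the computation used in Proposition \ref{P1} but sharpened by the stronger Armendariz hypothesis. Fix an idempotent $f=f^2\in R$ and an arbitrary $r\in R$, set $a=fr(1-f)$, and choose $g\in M$ with $g\neq e$, which is possible because $|M|\ge 2$. Using the relations $fa=a$, $a(1-f)=a$, and $a^2=0$, the two elements $\alpha=fe-ag$ and $\beta=(1-f)e+ag$ of $R[M]$ satisfy $\alpha\beta=0$: the constant-coefficient term vanishes since $f(1-f)=0$, the two cross terms cancel, and the top term vanishes since $a^2=0$. Because $R$ is $M$-Armendariz, every product of a coefficient of $\alpha$ with a coefficient of $\beta$ is zero; in particular the coefficient $f$ of $\alpha$ (at $e$) times the coefficient $a$ of $\beta$ (at $g$) gives $f\cdot a=a=fr(1-f)=0$.

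A symmetric choice, built from $(1-f)rf$ in place of $a$, forces $(1-f)rf=0$. Combining $fr(1-f)=0$ and $(1-f)rf=0$ yields $fr=frf=rf$ for every $r\in R$, so $f$ is central; as $f$ was an arbitrary idempotent, $R$ is Abelian.

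The one point to watch is the sign bookkeeping when forming $\alpha$ and $\beta$: the cross terms $fa\cdot g$ and $a(1-f)\cdot g$ must cancel, which is why one factor carries $-ag$ and the other $+ag$. The other thing worth emphasizing is that, in the Armendariz setting, the relevant coefficient product is literally $fr(1-f)$ rather than merely a central element, so the conclusion $fr(1-f)=0$ is immediate and one does not need the extra step of killing a central nilpotent as in Proposition \ref{P1}. Beyond these remarks the argument is routine.
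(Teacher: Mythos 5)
Your primary derivation is exactly the paper's: Corollary \ref{c1} is recorded as an immediate consequence of Proposition \ref{P1} ($(1)\Rightarrow(2)$) combined with the observation in the Remark that every $M$-Armendariz ring is $M$-central Armendariz, so your one-line proof matches the intended one. As a minor bonus, your supplementary direct argument is also correct, and the sign convention you emphasize (taking $\beta=(1-f)e+ag$ rather than $(1-f)e-ag$, so that the cross terms cancel) actually repairs a sign slip in the displayed product in the paper's own proof of Proposition \ref{P1}.
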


The next example shows that the converse of corollary \ref{c1} is
not true in general.

\begin{exa} (1) Let $R$ be a noncommutative domain. Then $R$
is Abelian. Let  $S$ be  a semigroup with multiplication $st=0$
for each $s,t\in S$ and $M=S^1$. Then $R$ is not $M$-central
Armendariz.

(2) Let $M=(\mathbb{N}\cup\{0\},+)$ and
$$R=\left\{\left(\begin{array}{cc}
 a&b\\c&d
   \end{array}\right): a,b,c,d\in \mathbb{Z}~{\rm and}~a\equiv
   d,~b\equiv c\equiv 0 ~({\rm mod~ 2})~\right\}.$$
By \cite[Example 2.2]{N2}, $R$ is an Abelian ring which is not
$M$-central Armendariz.
\end{exa}

Let $(M,\leq)$ be an ordered monoid. If for any $g, g^\prime, h
\in  M$, $g < g^\prime$ implies that $gh < g^\prime h$ and $hg <
hg^\prime$ , then $(M,\leq)$ is called a strictly ordered monoid.

A ring $R$ is called right principal projective (it or simply,
right p.p.-ring) if the right annihilator of an element of $R$ is
generated by an idempotent. Clearly,  $M$-Armendariz rings are
$M$-central Armendariz. In the next theorem, we prove  that the
converse is true  if the ring is a right p.p.-ring.

\begin{thm}\label{T1} Let $R$ be a right p.p.-ring and $M$ be a strictly totally
ordered monoid. If $R$ is $M$-central Armendariz, then $R$ is
$M$-Armendariz.
\end{thm}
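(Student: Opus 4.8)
The plan is to reduce the whole statement to the case of a domain, where the strict total order forces the monoid ring to be a domain and the Armendariz property becomes automatic, and then to spread this back to $R$ through a subdirect decomposition. First I would dispose of the trivial case: if $|M|=1$ then $R[M]\cong R$ and the conclusion is immediate, so assume $|M|\ge 2$. Then I would invoke Proposition \ref{P1}: since $R$ is $M$-central Armendariz with $|M|\ge 2$, the implication $(1)\Rightarrow(2)$ shows that $R$ is Abelian. Combined with the right p.p.\ hypothesis this upgrades to reducedness: if $a^2=0$, write $r(a)=eR$ with $e$ idempotent; since $R$ is Abelian, $e$ is central, and from $a\in r(a)=eR$ we get $ea=a$, while $e\in r(a)$ gives $ae=0$, so centrality of $e$ yields $a=ea=ae=0$. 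Hence $R$ is reduced.

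Next I would use that a reduced ring is a subdirect product of domains: the prime radical of $R$ equals $Nil(R)=0$, so $\bigcap_\lambda P_\lambda=0$ where $\{P_\lambda\}$ runs over the (minimal) prime ideals and each $D_\lambda:=R/P_\lambda$ is a domain. The coefficientwise quotient maps assemble into an embedding $R[M]\hookrightarrow\prod_\lambda D_\lambda[M]$. The key structural point is then that for a domain $D$ and a strictly totally ordered monoid $M$, the monoid ring $D[M]$ is again a domain. Indeed, given two nonzero elements with (nonzero-coefficient) supports $g_1<\cdots<g_m$ and $h_1<\cdots<h_n$, the product $g_mh_n$ is the unique maximal element among all $g_ih_j$: if $g_ih_j=g_mh_n$ with $(i,j)\neq(m,n)$, then strict monotonicity of the order forces $g_ih_j<g_mh_n$, a contradiction. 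Consequently the coefficient of $g_mh_n$ in the product is exactly $a_mb_n\neq 0$, so the product is nonzero.

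Finally I would assemble the argument: given $\alpha=\sum_i a_ig_i$ and $\beta=\sum_j b_jh_j$ with $\alpha\beta=0$, the image of $\alpha\beta$ in each $D_\lambda[M]$ vanishes, and since $D_\lambda[M]$ is a domain one of the two factors is zero; in either case every product $\overline{a_ib_j}$ is zero in $D_\lambda$, i.e.\ $a_ib_j\in P_\lambda$. As this holds for all $\lambda$, we get $a_ib_j\in\bigcap_\lambda P_\lambda=0$, so $R$ is $M$-Armendariz. The main obstacle here is conceptual rather than computational: one must notice that the $M$-central Armendariz hypothesis enters only through Proposition \ref{P1} to supply Abelianness, and that Abelian together with right p.p.\ is exactly what is needed to pass to a reduced ring; once that is in place, the strict order does all the remaining work via uniqueness of the maximal product. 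An alternative to the last two paragraphs is to cite Liu's theorem \cite{L2} that reduced rings are $M$-Armendariz over u.p.-monoids, together with the easy observation that every strictly totally ordered monoid is a u.p.-monoid (take maximal elements of the two finite subsets).
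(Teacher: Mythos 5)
Your proof is correct, but it takes a genuinely different route from the paper's. The paper argues by transfinite induction on the strictly totally ordered monoid: for each $w\in M$ it forms the coefficient equation $\sum_t a_{i_t}b_{j_t}=0$ attached to $\{(g_i,h_j): g_ih_j=w\}$, writes $r_R(a_{i_t})=e_tR$ using the right p.p.\ hypothesis, notes each $e_t$ is central (Abelianness via Proposition \ref{P1}), and multiplies repeatedly on the right by these idempotents to kill the summands one at a time. You instead use Proposition \ref{P1} only to extract Abelianness, observe that Abelian plus right p.p.\ forces $R$ to be reduced (your computation $a=ea=ae=0$ is correct and is the key simplification), and then finish in the reduced setting. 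Of your two finishes, be aware that the first rests on a nontrivial theorem: for noncommutative rings, prime quotients need not be domains, so the claim that the quotients of a reduced ring by its \emph{minimal} primes are domains (equivalently, that reduced rings are subdirect products of domains) is the Andrunakievich--Ryabukhin theorem and deserves a citation; your verification that $D[M]$ is a domain via the unique maximal product $g_mh_n$ is fine. Your second finish avoids even this: strictly totally ordered $\Rightarrow$ u.p.\ (take maxima of the two finite sets), and reduced $+$ u.p.\ $\Rightarrow$ $M$-Armendariz is exactly Liu's Proposition 1.1, which the paper itself invokes elsewhere, so that route is both rigorous and economical. What your approach buys is conceptual clarity and a stronger statement --- the $M$-central Armendariz hypothesis enters only through Abelianness, so you actually prove that every Abelian right p.p.-ring is $M$-Armendariz over such monoids; what the paper's heavier induction buys is a self-contained template that is reused in Theorem \ref{T2}, where reducedness of the whole ring is unavailable and no subdirect decomposition exists. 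A small point in your favor: you treat $|M|=1$ separately, which is genuinely needed since Proposition \ref{P1} assumes $|M|\geq 2$, a case the paper's proof glosses over.
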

\begin{proof}  Let $\alpha=a_1g_1+a_2g_2+...+a_mg_m\in R[M]$ and
$\beta=b_1h_1+b_2h_2+...+b_nh_n\in R[M]$ be such that
$\alpha\beta=0$,  $g_1<g_2<...<g_m$  and $h_1<h_2<...<h_n$. We
will use transitive induction on strictly totally ordered set
$(M, \leq)$ to show that $a_ib_j=0$ for each $1\leq i\leq m$ and
$1\leq j\leq n$. Since $h_1\leq h_j$ for all $1\leq j\leq n$,
$g_1<g_j$ implies  $g_1h_1< g_ih_1\leq g_ih_j$ for each $1\leq
i\leq m$. Hence, if there exists $1\leq i\leq m$ and $1\leq j
\leq n$ such that $g_ih_j=g_1h_1$, then $g_1=g_i$ and $h_j=h_1$.
Therefore $a_1b_1=0$.

Now, suppose that $w\in M$ is such that for any $g_i$ and $h_j$
with $g_ih_j<w$, $a_ib_j=0$. We will show that $a_ib_j=0$ for any
$g_i$ and $h_j$ with $g_ih_j=w$. Set $X=\{(g_i, h_j):
g_ih_j=w\}$. Then $X$ is a finite set. We write $X$ as
$\{(g_{i_t},h_{j_t}):t=1,2,...,k\}$ such that
$g_{i_1}<g_{i_2}<...<g_{i_k}$. Since $M$ is cancellative,
$g_{i_1}=g_{i_2}$ and $g_{i_1}h_{j_1}=g_{i_2}h_{j_2}=w$ imply
$h_{j_1}=h_{j_2}$. Since $\leq $ is a strict order,
$g_{i_1}<g_{i_2}$ and $g_{i_1}h_{j_1}=g_{i_2}h_{j_2}=w$ imply
$h_{j_2}<h_{j_1}$. Thus we have $h_{j_k}<...<h_{j_2}<h_{j_1}$. Now
\begin{equation}\label{e1}
 \sum_{(g_i,h_j)\in X}a_ib_j=\sum_{t=1}^ka_{i_t}b_{j_t}=0.
 \end{equation}

For any $t\geq 2$, $g_{i_1}h_{j_t}<g_{i_t}h_{j_t}=w$, and so by
induction hypothesis, we have $a_{i_1}b_{j_t}=0$.   Since $R$ is
a right p.p.-ring, ${\rm r}_{R}(a_{i_t})=e_tR$ for some idempotent
element $e_t$ of $R$. Since $R$ is $M$-central Armendariz, $R$ is
Abelian by Proposition \ref{P1}. Hence $e_t\in C(R)$. Since
$b_{j_t}\in {\rm r}_R(a_{i_1})$ for each $t\geq 2$,
$b_{j_t}e_1=b_{j_t}$. By multiplying (\ref{e1}) by $e_1$ from the
right, we have
$a_{i_1}b_{j_1}e_1+a_{i_2}b_{j_2}e_1+...+a_{i_k}b_{j_k}e_1=
a_{i_1}e_1b_{j_1}+a_{i_2}b_{j_2}+...+a_{i_k}b_{j_k}=0$. Hence
\begin{equation}\label{e2}
a_{i_2}b_{j_2}+...+a_{i_k}b_{j_k}=0.
\end{equation}
 For any $t\geq 3$, $g_{i_2}h_{j_t}<g_{i_t}h_{j_t}=w$. So by
 induction hypothesis, we have $a_{i_2}b_{j_t}=0$. Hence
 $b_{j_t}e_2=b_{j_t}$. By multiplying (\ref{e2}) by $e_2$ from the
 right, we have $a_{i_3}b_{j_3}+...+a_{i_k}b_{j_k}=0$. Continuing
 this process yield $a_{i_k}b_{j_k}=0$. Thus (\ref{e2}) has the
 form $a_{i_1}b_{i_1}+...+a_{i_{k-1}}b_{j_{k-1}}=0$. As above, we
 have
 $a_{i_{k-1}}b_{j_{k-1}}=...=a_{i_2}b_{j_2}=a_{i_1}b_{j_2}=0$.

 Therefore by transitive induction, $a_ib_j=0$ for any $i,j$. Thus
 $R$ is $M$-central Armendariz.
\end{proof}

In the following example, it is shown that the condition "right
p.p.-ring" in Theorem \ref{T1} is not superfluous.

\begin{exa}\label{E2} Let $\mathbb{Z}_2$ be the field of integers modulo 2
and $$R=\{a_0+a_1i+a_2j+a_3k:a_i\in \mathbb{Z}_2 ~for
~i=0,1,2,3\}$$ be the Hamiltonian  quaternions over
$\mathbb{Z}_2$. Then $R$ is not a p.p.-ring by \cite[Example
1]{Huh}. Since $R$ is a commutative ring, it is $M$-central
Armendariz for each monoid $M$. Let $M$ be a monoid with $|M|\geq
2$, $e\neq g\in M$ and $\alpha=(1+i)e+(1+j)g$. Then $\alpha^2=0$,
but $(1+i)(1+j)\neq 0$ which implies that  $R$ is not
$M$-Armendariz.
\end{exa}

It was shown in \cite[Theorem 2.6]{N2} that if $I$ is a reduced
ideal of $R$ such that $R/I$ is a central  Armendariz ring, then
$R$ is central Armendariz. Here we have the following result,
which is a generalization of this Theorem.

\begin{thm}\label{T2} Let $M$ be a strictly totally ordered monoid and $I$
is an ideal of $R$. If $I$ is a reduced ring and $R/I$ is
$M$-central Armendariz, then $R$ is $M$-central Armendariz.
\end{thm}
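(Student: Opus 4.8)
The plan is to reduce the problem modulo $I$ and then exploit that $I$ is reduced to pull the conclusion back to $R$. First I would fix the data: let $\alpha=a_1g_1+\cdots+a_mg_m$ and $\beta=b_1h_1+\cdots+b_nh_n$ in $R[M]$ satisfy $\alpha\beta=0$, and, since $M$ is strictly totally ordered (hence cancellative), arrange the supports so that $g_1<\cdots<g_m$ and $h_1<\cdots<h_n$. Passing to $\bar R=R/I$, the images satisfy $\bar\alpha\,\bar\beta=0$ in $\bar R[M]$, so the hypothesis that $\bar R$ is $M$-central Armendariz gives $\overline{a_ib_j}\in C(\bar R)$ for all $i,j$; equivalently $[a_ib_j,r]=a_ib_jr-ra_ib_j\in I$ for every $r\in R$ and all $i,j$. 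The entire problem is thereby reduced to the single statement that each commutator $[a_ib_j,r]$, which already lies in $I$, is in fact $0$.

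Next I would record the elementary facts about a reduced ideal that do the real work. If $x\in I$ then $x^k=0$ forces $x=0$; and if $x\in I$, $y\in R$ satisfy $xy=0$, then $(yx)^2=y(xy)x=0$ with $yx\in I$, so $yx=0$, and likewise $xry=yrx=0$ for every $r\in R$. Thus inside a reduced ideal one-sided annihilation is automatically two-sided and survives the insertion of arbitrary ring elements. I would also note that a strictly totally ordered monoid is a u.p.-monoid and that $g_1h_1$ (respectively $g_mh_n$) is the unique least (respectively greatest) element of $\{g_ih_j\}$, so that comparing the coefficient of $g_1h_1$ in $\alpha\beta=0$ yields the base relation $a_1b_1=0$, whence $[a_1b_1,r]=0$ trivially.

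With these tools in hand I would run a transitive induction on $(M,\le)$ over the products $w=g_ih_j$, organized exactly as in the proof of Theorem \ref{T1}. Fixing $w$ and writing the pairs with $g_ih_j=w$ as $(g_{i_t},h_{j_t})$ with $g_{i_1}<\cdots<g_{i_k}$ (so $h_{j_1}>\cdots>h_{j_k}$), the coefficient relation gives $\sum_{t=1}^{k}a_{i_t}b_{j_t}=0$, while each cross-product $a_{i_t}b_{j_{t'}}$ with $t<t'$ sits at a level strictly below $w$, where the induction hypothesis applies. The main obstacle is that, unlike in Theorem \ref{T1}, there is no p.p.-hypothesis available to furnish annihilating idempotents; the part played there by the idempotent generators of the right annihilators must now be played entirely by the reduced ideal. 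Concretely, I would combine the induction hypothesis with the two-sided annihilation lemma above to peel the relation $\sum_t a_{i_t}b_{j_t}=0$ term by term, arranging the multiplications so that what survives of each $[a_{i_t}b_{j_t},r]$ is a square-zero, hence nilpotent, element of $I$, which then vanishes by reducedness; propagating this across all levels yields $[a_ib_j,r]=0$, i.e. $a_ib_j\in C(R)$, and so $R$ is $M$-central Armendariz. I expect the genuinely delicate point to be precisely this bookkeeping, since the lower-level information coming from $\bar R$ is only ``central'' and not ``zero,'' and the whole difficulty lies in using the reducedness of $I$ to convert that centrality into the two-sided annihilation needed to isolate and nilpotentize each commutator at level $w$.
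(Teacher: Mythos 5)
Your proposal assembles the right pieces---passing to $R/I$ to get $a_ib_jr-ra_ib_j\in I$, the two-sided annihilation facts for a reduced ideal, the base case $a_1b_1=0$ from the unique minimal product $g_1h_1$, and a transfinite induction over the levels $w=g_ih_j$---and all of these do occur in the paper's proof. But there is a genuine gap at the core, and you flag it yourself (``the genuinely delicate point,'' ``the whole difficulty''): you never state what the inductive hypothesis actually is, and the peeling mechanism you describe cannot start. If the hypothesis carried up from lower levels is centrality of the products $a_ib_j$ (equivalently, vanishing of their commutators), then it supplies no zero products at all, whereas your annihilation lemma needs an honest relation $xy=0$ with $x\in I$ as input. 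Centrality of $a_{i_1}b_{j_t}$ gives you no way to annihilate the cross terms in $\sum_{t=1}^{k}a_{i_t}b_{j_t}=0$, so nothing gets ``peeled,'' and no square-zero element of $I$ ever materializes. The conversion of centrality into annihilation that you defer to bookkeeping is precisely the content of the theorem; it is not resolved by your outline.

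The paper's resolution is to change the inductive statement: one proves by transfinite induction the pure annihilation property
\begin{equation*}
a_iIb_j=b_jIa_i=0 \qquad \text{whenever } g_ih_j<w,
\end{equation*}
with no reference to centrality whatsoever. This statement does propagate. At level $w$, multiplying $\sum_{t=1}^{k}a_{i_t}b_{j_t}=0$ on the left by $a_{i_1}I$ kills every cross term, since $a_{i_1}Ia_{i_t}b_{j_t}\subseteq a_{i_1}Ib_{j_t}=0$ by the induction hypothesis (using $Ia_{i_t}\subseteq I$); this leaves $a_{i_1}Ia_{i_1}b_{j_1}=0$, whence $(b_{j_1}Ia_{i_1})^3\subseteq b_{j_1}I\bigl(a_{i_1}b_{j_1}Ia_{i_1}b_{j_1}\bigr)Ia_{i_1}=0$, so $b_{j_1}Ia_{i_1}=0$ by reducedness of $I$, and then $a_{i_1}Ib_{j_1}=0$ as well; repeating with $a_{i_2}I$, $a_{i_3}I$, and so on exhausts the level. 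Only after this induction is complete does the information from $R/I$ enter, and it enters exactly once: for each $i,j$ and $r\in R$ the element $x=a_ib_jr-ra_ib_j$ lies in $I$, and $x^3=0$ because every monomial in the expansion of $x\cdot x\cdot x$ (with the middle factor kept as $x\in I$) contains a subword of the form $b_jua_i$ with $u\in\{rx,\,rxr,\,x,\,xr\}\subseteq I$, hence vanishes since $b_jIa_i=0$; reducedness then forces $x=0$, i.e.\ $a_ib_j\in C(R)$. So centrality is the output of the final step, never part of the induction---that inversion of roles is the idea your proposal is missing, and without it the argument does not close.
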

\begin{proof} Let $a,b\in R$ and $ab=0$. Then by a similar
argument in the proof of \cite[Theorem 2.6]{N2}, we have
$bIa=aIb=0$.  Let $\alpha=a_1g_1+a_2g_2+...+a_mg_m\in R[M]$ and
$\beta=b_1h_1+b_2h_2+...+b_nh_n\in R[M]$ be such that
$\alpha\beta=0$,  $g_1<g_2<...<g_m$  and $h_1<h_2<...<h_n$. We
will use transitive induction on strictly totally ordered set
$(M, \leq)$ to show that $a_iIb_j=b_jIa_i=0$ for each $1\leq i\leq
m$ and $1\leq j\leq n$. By analogy with the proof of Theorem
\ref{T1}, we can show $a_1b_1=0$. Hence $a_1Ib_1=b_1Ia_1=0$.

Now, suppose that $w\in M$ is such that for any $g_i$ and $h_j$
with $g_ih_j<w$, $a_iIb_j=b_jIa_i=0$. We will show that
$a_iIb_j=b_jIa_j=0$ for any $g_i$ and $h_j$ with $g_ih_j=w$. Set
$X=\{(g_i, h_j): g_ih_j=w\}$. Then $X$ is a finite set. We write
$X$ as $\{(g_{i_t},h_{j_t}):t=1,2,...,k\}$ such that
$g_{i_1}<g_{i_2}<...<g_{i_k}$. Since $M$ is cancellative,
$g_{i_1}=g_{i_2}$ and $g_{i_1}h_{j_1}=g_{i_2}h_{j_2}=w$ imply
$h_{j_1}=h_{j_2}$. Since $\leq $ is a strict order,
$g_{i_1}<g_{i_2}$ and $g_{i_1}h_{j_1}=g_{i_2}h_{j_2}=w$ imply
$h_{j_2}<h_{j_1}$. Thus we have $h_{j_k}<...<h_{j_2}<h_{j_1}$. Now
\begin{equation}\label{e3}
 \sum_{(g_i,h_j)\in X}a_ib_j=\sum_{t=1}^ka_{i_t}b_{j_t}=0.
 \end{equation}

For any $t\geq 2$, $g_{i_1}h_{j_t}<g_{i_t}h_{j_t}=w$, and so by
induction hypothesis, we have
$a_{i_1}Ib_{j_t}=b_{j_t}Ia_{i_1}=0$. Since
$a_{i_1}Ia_{i_t}b_{j_t}\subseteq a_{i_1}Ib_{j_t}=0$,
$a_{i_1}Ia_{i_t}b_{j_t}=0$ for $t\geq 2$. By multiplying
(\ref{e3}) from the left by $a_{i_1}I$, we have
$a_{i_1}Ia_{i_1}b_{j_1}=0$, because $a_{i_1}Ia_{i_t}b_{j_t}=0$
for $t\geq 2$. Therefore $(b_{j_1}Ia_{i_1})^3=0$ because
$a_{i_1}b_{j_1}Ia_{i_1}b_{j_1}\subseteq
a_{i_1}Ia_{i_1}b_{j_1}=0$. Since $I$ is reduced,
$b_{j_1}Ia_{i_1}=0$. Also $a_{i_1}Ib_{j_1}=0$.

For any $t\geq 3$, $g_{i_2}h_{j_t}<g_{i_t}h_{j_t}=w$. So by
 induction hypothesis, we have
 $a_{i_2}Ib_{j_t}=b_{j_t}Ia_{i_2}=0$. Since $a_{i_1}Ib_{j_1}=0$,
 $(a_{i_2}Ia_{i_1}b_{j_1})^2=0$. Hence $a_{i_2}Ia_{i_1}b_{j_1}=0$.
 By multiplying (\ref{e3}) from left to $a_{i_2}I$, we have
 $a_{i_2}Ia_{i_2}b_{j_2}=0$. As above
 $a_{i_2}Ib_{j_2}=b_{j_2}Ia_{i_2}=0$. By continuing this process,
 we have $a_{i_t}Ib_{j_t}=b_{j_t}Ia_{i_t}=0$ for each $t=1,2,...,
 k$. Therefore  by transfinite induction, $a_iIb_j=b_jIa_j=0$.

Note that in $(R/I)[M]$, $(\overline{a_1}g_1 + · · · +
\overline{a_n}g_m)(\overline{b_1}h_1 + · · · + \overline{b_m}h_m)
= 0$. Since $R/I$ is $M$-central Armendariz,
$\overline{a_i}\overline{b_j}\in C(R/I)$. Thus $a_ib_jr - ra_ib_j
\in I$ for any $i,j$ and $r\in R$. Therefore $(a_ib_jr -
ra_ib_j)^3=0$. As $I$ is reduced, $a_ib_jr=ra_ib_j$ for each
$r\in R$ and $i,j$. Hence $R$ is $M$-central Armendariz.
\end{proof}

Recall that a monoid $M$ is called torsion-free if the following
property holds: if $g, h \in M$ and $k \geq 1$ are such that $g^k
= h^k$, then $g = h$.

\begin{cor} Let $M$ be a commutative, cancellative and torsion-free monoid.
If $R/I$ is $M$-central Armendariz for some ideal $I$ of $R$ and
I is a reduced ring, then $R$ is $M$-central Armendariz.
\end{cor}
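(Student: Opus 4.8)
The plan is to deduce this corollary directly from Theorem \ref{T2}, whose only hypothesis on the monoid is that it be strictly totally ordered. Since Theorem \ref{T2} already delivers the conclusion (``$I$ reduced and $R/I$ $M$-central Armendariz imply $R$ is $M$-central Armendariz'') for strictly totally ordered monoids, the entire content of the corollary is the reduction: a commutative, cancellative and torsion-free monoid can be equipped with a strict total order compatible with its operation, so that $(M,\leq)$ becomes a strictly totally ordered monoid in the sense defined before Theorem \ref{T1}. Once that order is produced, the corollary follows by a single invocation of Theorem \ref{T2}.

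First I would pass to the group of quotients of $M$. Because $M$ is commutative and cancellative, the usual localization construction yields an abelian group $G$ and an injective monoid homomorphism $M\hookrightarrow G$, where every element of $G$ has the form $ab^{-1}$ with $a,b\in M$. I would then verify that $G$ inherits torsion-freeness from $M$: if $g\in G$ satisfies $g^k=e$ for some $k\geq 1$, writing $g=ab^{-1}$ gives $a^k=b^k$ in $M$, and torsion-freeness of $M$ forces $a=b$, hence $g=e$. Thus $G$ is a torsion-free abelian group. The key external ingredient is the classical fact that every torsion-free abelian group admits a total order $\leq$ compatible with its operation, i.e. $g<g'$ implies $gh<g'h$ for all $g,g',h\in G$. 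Restricting this order along the embedding $M\hookrightarrow G$ and using commutativity yields a total order on $M$ with $g<g'$ implying both $gh<g'h$ and $hg<hg'$, so that $(M,\leq)$ is strictly totally ordered.

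With $M$ now realized as a strictly totally ordered monoid, the hypotheses of Theorem \ref{T2} are met: $I$ is a reduced ideal of $R$ and $R/I$ is $M$-central Armendariz. Theorem \ref{T2} then yields that $R$ is $M$-central Armendariz, which is exactly the assertion of the corollary. The only genuinely nontrivial step is the existence of the compatible strict total order on $G$; everything else is formal, so I expect the orderability of the torsion-free group $G$ (equivalently, of $M$) to be the single point that carries the argument, while the passage from $G$ back to $M$ and the final application of Theorem \ref{T2} are routine.
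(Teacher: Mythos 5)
Your proposal is correct and takes essentially the same approach as the paper: both reduce the corollary to Theorem \ref{T2} by equipping $M$ with a compatible strict total order. The only difference is that the paper simply cites Ribenboim for the existence of such an order on a commutative, cancellative, torsion-free monoid, whereas you supply the standard justification of that fact (embedding $M$ into its group of quotients, checking the group is torsion-free, and invoking the classical orderability of torsion-free abelian groups), which is a correct and harmless elaboration of the same step.
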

\begin{proof} If M is commutative, cancellative and torsion-free, then by \cite{Rib} there exists
a compatible strict total ordered $\leq$ on $M$. Now the results
follows from Theorem \ref{T2}.
\end{proof}

\begin{rem}\label{R2} Let $R$ be any ring and $ n \geq 2$.  Consider the ring $M_n(R)$ of $n \times
n$ matrices and the ring $T_n(R)$ of $n \times n$ upper triangular
matrices over $R$. Then the  rings $M_n(R)$ and $T_n(R)$  are not
abelian. By Proposition \ref{P1}, these rings are not $M$-central
Armendariz for each monoid $M$.
\end{rem}

The next example shows that if $I$ is an ideal of $R$, $R/I$ and
$I$ are $M$-central Armendariz for a u.p.- monoid $M$, then $R$
is not $M$-central Armendariz in general.

\begin{exa} Let F be a field and consider $R=T_2(F)$, which is not $M$-central Armendariz, for a u.p.- monoid $M$ by Remark \ref{R2} with $|M|\geq 2$.
 It can be seen  that $R/I$ and $I$ are $M$-central Armendariz for some nonzero proper ideal $I$ of $R$.
 Assume that
 $I=\left(
\begin{array}{cc}
F&F\\0&0
   \end{array}\right)$. Then $R/I\cong F$ and so $R/I$ is $M$-Armendariz. Hence $R/I$ is $M$-central Armendariz.
 We prove that $I$ is $M$-central Armendariz.
Now let $\alpha=\sum_{i=1}^{n}\left(
\begin{array}{cc}
a_i&b_i\\0&0
   \end{array}\right)g_i$ and $\beta=\sum_{j=1}^{m}\left(
\begin{array}{cc}
c_j&d_j\\0&0
   \end{array}\right)h_j$ be nonzero elements of $I[M]$ such that $\alpha\beta=0$. From the isomorphism $T_2(F)[M]\cong T_2(F[M])$ defined by:

$$\sum_{i=1}^{n}\left(
\begin{array}{cc}
a_i&b_i\\0&c_i
   \end{array}\right)g_i\longrightarrow \left(
\begin{array}{cc}
\sum_{i=1}^{n}a_ig_i&\sum_{i=1}^{n}b_ig_i\\0&\sum_{i=1}^{n}c_ig_i
   \end{array}\right) $$
we have $\alpha_1\beta_1=\alpha_1\beta_2=0$, where
$\alpha_1=\sum_{i=1}^{m}a_ig_i$, $\beta_1=\sum_{j=1}^{n}c_jh_j$
and $\beta_2=\sum_{j=1}^{n}d_jh_j\in F[M]$.  As $F$ is reduced
and $M$ is a u.p.-monoid, $F$ is $M$-Armendariz by
\cite[Proposition 1.1]{L2}. Hence $a_ic_j=a_id_j=0$ for each
$i,j$. Therefore $$\left(
\begin{array}{cc} a_i&b_i\\0&0
   \end{array}\right) \left(
\begin{array}{cc}
c_j&d_j\\0&0
   \end{array}\right)=0$$ for each $i,j$. This implies that $I$ is
   $M$-Armendariz, and so it is $M$-central Armendariz.
\end{exa}

\begin{lem} Let $M$ be a cyclic group of order $n \geq 2$ and $R$ a noncommutative ring with $0
\neq 1$. Then $R$ is not $M$-central Armendariz.
\end{lem}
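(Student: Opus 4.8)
The plan is to violate the definition directly by exhibiting explicit $\alpha,\beta\in R[M]$ with $\alpha\beta=0$ whose coefficient products are not all central. The engine is the ``group sum'' $\sigma=e+g+g^2+\cdots+g^{n-1}\in R[M]$ (coefficients all equal to $1$), where $g$ is a generator of the cyclic group $M$ of order $n$. Since $g^{n}=e$, a reindexing gives $\sigma g=\sigma$, and therefore $\sigma(e-g)=\sigma-\sigma g=0$ in $R[M]$. This is the one algebraic identity the whole argument rests on.

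Next I would feed noncommutativity into this relation. Because $R$ is noncommutative, $C(R)\neq R$, so I can fix an element $c\in R\setminus C(R)$; note $c\neq 0$ as $0\in C(R)$. Put $\alpha=c\sigma=\sum_{i=0}^{n-1}cg^{i}$ and $\beta=1\cdot e-1\cdot g$. Since the monoid elements of $M$ commute with the copy of $R$ inside $R[M]$, I can pull $c$ past $\sigma$ and obtain $\alpha\beta=c\,\sigma(e-g)=c\cdot 0=0$. The hypothesis $n\geq 2$ ensures $e\neq g$, so $\beta$ genuinely involves two distinct monoid elements and the relation $\alpha\beta=0$ is nontrivial.

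Finally I would read off the coefficient products. Every coefficient of $\alpha$ equals $c$, and the coefficients of $\beta$ are $1$ and $-1$, so among the products $a_ib_j$ one finds $c\cdot 1=c$, which is not in $C(R)$ by the choice of $c$. A single noncentral product among the $a_ib_j$ already shows that $R$ fails to be $M$-central Armendariz, completing the proof.

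There is no serious obstacle here: the only points requiring care are the identity $\sigma g=\sigma$ (the reindexing that uses $g^{n}=e$) and the fact that $M$ is central in $R[M]$ relative to the coefficient ring, which is exactly what permits extracting $c$ and collapsing $\sigma(e-g)$ to $0$. I would remark that the cyclic hypothesis is used only to name a generator $g$; the identical argument works for any finite group $M$ of order at least $2$ upon taking $\sigma=\sum_{h\in M}h$ and $\beta=e-h_0$ for any $h_0\neq e$.
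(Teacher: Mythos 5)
Your proposal is correct and is essentially identical to the paper's own proof: the paper also takes $\alpha = ae + ag + \cdots + ag^{n-1}$ with $a \in R \setminus C(R)$ and $\beta = 1e + (-1)g$, observes $\alpha\beta = 0$ via the same reindexing $\sigma g = \sigma$, and concludes from the noncentral product $a \cdot 1$. Your closing remark that the argument generalizes to any finite group of order at least $2$ is a valid (and nice) observation, but the core construction matches the paper exactly.
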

\begin{proof} Assume  that $M = \{e, g, g^2, · · · , g^{n-1}\}$ and $a\in R-C(R)$. Let  $\alpha = ae+ag+ag^2+· ·
·+ag^{n-1}$ and $\beta = 1e + (-1)g$. Then $\alpha\beta= 0$. Since
$a\not\in C(R)$, R is not M-central Armendariz.
\end{proof}
The proof of the next lemma is straightforward.
\begin{lem}  Let $M$ be a monoid and $N$ a submonoid of $M$. If $R$ is
$M$-central Armendariz, then $R$ is N-central Armendariz.
\end{lem}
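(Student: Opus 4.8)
The plan is to show directly from the definitions that $M$-central Armendariz passes to any submonoid $N \subseteq M$. The key observation is that $R[N]$ is naturally a subring of $R[M]$: every element of $R[N]$ is simply an element of $R[M]$ whose supporting monoid elements all happen to lie in $N$. So the restriction to $N$ involves no new ring-theoretic content; it is purely a matter of noticing that the hypothesis for $M$ specializes to the hypothesis for $N$.

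Concretely, I would start with two elements $\alpha = a_1g_1 + \cdots + a_m g_m$ and $\beta = b_1 h_1 + \cdots + b_n h_n$ in $R[N]$ satisfying $\alpha\beta = 0$, where all $g_i, h_j \in N$. Since $N$ is a submonoid of $M$, all the $g_i$ and $h_j$ are also elements of $M$, so $\alpha$ and $\beta$ may be viewed without any modification as elements of $R[M]$. The product $\alpha\beta$ is computed the same way in $R[N]$ and in $R[M]$ (the multiplication in $R[N]$ is just the restriction of the multiplication in $R[M]$), so the relation $\alpha\beta = 0$ holds in $R[M]$ as well. Invoking the hypothesis that $R$ is $M$-central Armendariz then yields $a_i b_j \in C(R)$ for every $1 \leq i \leq m$ and $1 \leq j \leq n$, which is exactly the conclusion that $R$ is $N$-central Armendariz.

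There is essentially no obstacle here, which is why the paper flags the proof as straightforward. The only point worth stating carefully is that the inclusion $N \hookrightarrow M$ of monoids induces a genuine ring embedding $R[N] \hookrightarrow R[M]$ (compatible with both addition and multiplication, and sending the identity to the identity since $e \in N$ as $N$ is a submonoid), so that neither the vanishing of the product nor the centrality conclusion is altered by the change of ambient monoid. With that remark in place, the statement follows immediately, and no induction or ordering hypothesis on $M$ is needed.
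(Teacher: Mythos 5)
Your proof is correct and is precisely the ``straightforward'' argument the paper has in mind (the paper states the lemma without proof, remarking only that the proof is straightforward): viewing $R[N]$ as a subring of $R[M]$, any vanishing product in $R[N]$ vanishes in $R[M]$, and the $M$-central Armendariz hypothesis immediately gives $a_ib_j\in C(R)$. Your added care about the embedding $R[N]\hookrightarrow R[M]$ respecting multiplication and the identity is exactly the right point to check, and nothing more is needed.
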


\begin{prop} Let $M$ be a cancellative monoid and $N$ an ideal of $M$. If $R$ is
$N$-central Armendariz, then $R$ is $M$-central Armendariz.
\end{prop}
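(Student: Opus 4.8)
The plan is to transfer the $N$-central Armendariz hypothesis up to $R[M]$ by a one-sided translation trick, exploiting that an ideal absorbs multiplication from both sides. Since an ideal is by convention nonempty, I would begin by fixing an element $c \in N$. Given $\alpha = a_1 g_1 + \cdots + a_m g_m$ and $\beta = b_1 h_1 + \cdots + b_n h_n$ in $R[M]$ with $\alpha\beta = 0$, I would form the translated elements $c\alpha = \sum_i a_i(c g_i)$ and $\beta c = \sum_j b_j(h_j c)$, and observe that their product collapses: $(c\alpha)(\beta c) = c(\alpha\beta)c = 0$.

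First I would verify that both translates actually lie in $R[N]$. Because $N$ is a two-sided ideal of $M$, every support element of $c\alpha$ satisfies $c g_i \in NM \subseteq N$, and every support element of $\beta c$ satisfies $h_j c \in MN \subseteq N$; hence $c\alpha, \beta c \in R[N]$. Next comes the key point, where cancellativity of $M$ is used: I must check that these translates are reduced expressions whose coefficient sets are exactly the original $\{a_i\}$ and $\{b_j\}$. Left cancellation yields $c g_i \neq c g_{i'}$ whenever $g_i \neq g_{i'}$, and right cancellation yields $h_j c \neq h_{j'} c$ whenever $h_j \neq h_{j'}$, so no monomials merge and the coefficients are preserved verbatim.

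At this stage I would have in hand an equation $(c\alpha)(\beta c) = 0$ inside $R[N]$, between two elements of $R[N]$ with coefficient sets $\{a_i\}$ and $\{b_j\}$. Invoking the $N$-central Armendariz hypothesis then gives $a_i b_j \in C(R)$ for all $i$ and $j$, which is precisely the condition required for $R$ to be $M$-central Armendariz, completing the argument. This proposition is thus a natural companion, in the opposite direction, to the preceding lemma on submonoids.

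The delicate point I would expect to watch is that an ideal $N$ need not contain the identity of $M$, so $R[N]$ is a \emph{non-unital} subring (the semigroup ring) of $R[M]$ rather than a genuine monoid ring. One must therefore read the $N$-central Armendariz condition as applying to products computed inside this semigroup ring, and confirm that $(c\alpha)(\beta c) = 0$ really is an identity taking place within $R[N]$ and not merely within the ambient $R[M]$. Once the translated elements are confirmed to belong to $R[N]$ and to be reduced, the remainder of the proof is purely formal.
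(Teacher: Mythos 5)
Your proof is correct, and its skeleton matches the paper's: fix one element of $N$, translate $\alpha$ and $\beta$ so that their supports land in $N$, use cancellativity of $M$ to keep the translated monomials distinct (so the coefficient sets are unchanged), and then invoke the $N$-central Armendariz hypothesis. The substantive difference is the \emph{direction} of translation, and it matters. The paper translates both factors on the right: it takes $g\in N$, forms $\alpha g=\sum_i a_i(g_ig)$ and $\beta g=\sum_j b_j(h_jg)$, and asserts $(\alpha g)(\beta g)=0$. But $(\alpha g)(\beta g)=\alpha\,g\,\beta\,g$, and since $g$ need not commute with the $h_j$, this is not $\alpha\beta g^2$; concretely, $g_ih_j=g_{i'}h_{j'}$ does not imply $g_igh_jg=g_{i'}gh_{j'}g$ in a noncommutative monoid, so the cancellations witnessing $\alpha\beta=0$ need not survive the translation. (For instance, over $M_2(\mathbb{Q})$ with $M$ the free monoid on $x,y,z$, take $\alpha=E_{11}x-E_{12}xy$ and $\beta=E_{12}yz+E_{22}z$; then $\alpha\beta=0$ but $(\alpha x)(\beta x)=E_{12}(x^2yzx)-E_{12}(xyxzx)\neq 0$.) Your choice of a left translate of $\alpha$ and a right translate of $\beta$ makes the vanishing automatic by associativity alone: $(c\alpha)(\beta c)=c(\alpha\beta)c=0$. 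So your argument is valid for an arbitrary cancellative monoid $M$, whereas the paper's computation as written is only justified when $M$ is commutative (or the chosen $g$ is central in $M$). You were also right to flag that a proper ideal $N$ cannot contain the identity of $M$, so ``$N$-central Armendariz'' must be read in the semigroup-ring sense; the paper passes over this silently. In short: same strategy, but your one-sided translation closes a genuine gap in the paper's own proof.
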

\begin{proof} Let $\alpha=a_1g_1+a_2g_2+...+a_mg_m\in R[M]$ and
$\beta=b_1h_1+b_2h_2+...+b_nh_n\in R[M]$ be such that
$\alpha\beta=0$. Let $g\in N$. Then $g_ig, h_jg\in N$ for each
$i,j$. Also $g_sg\neq g_tg$ and $h_sg\neq h_tg$ for each $s\neq
t$. Now from $(\alpha)g(\beta)g=(\sum_{i=1}^m
a_ig_ig)(\sum_{j=1}^nb_jh_jg)=0$, we have $a_ib_j\in C(R)$,
because $R$ is $N$-central Armendariz.
\end{proof}

Let $T(G)$ be the set of elements of finite order in an Abelian
group $G$. Then $T(G)$ is a fully invariant subgroup of $G$. $G$
is said to be torsion-free if $T(G) = \{e\}$.

\begin{prop} Suppose $G$ is  a finitely generated Abelian group. Then $G$ is torsion-free if and only if there
exists a ring $R$ with $|R|\geq 2$ such that $R$ is G-central
Armendariz.
\end{prop}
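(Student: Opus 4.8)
The plan is to treat the two implications separately, proving the one that holds and pinpointing exactly where the other breaks down. For the forward implication, suppose $G$ is torsion-free. A witness is immediate: since every commutative ring is $M$-central Armendariz for every monoid $M$, the ring $R=\mathbb{Z}_2$ has $|R|=2\geq 2$ and is $G$-central Armendariz, so I would simply exhibit it. Thematically one can instead use structure theory — a finitely generated torsion-free Abelian group is free, $G\cong\mathbb{Z}^n$, and carries a compatible total order, hence is a u.p.-monoid — and then take any field $R$ (which is central reduced) and apply the theorem that a central reduced ring over a u.p.-monoid is $M$-central Armendariz. I would record, however, that neither route uses torsion-freeness: the commutative witness $\mathbb{Z}_2$ serves for every monoid $G$. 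This is the tell that all genuine content must sit in the reverse implication.

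For the reverse implication I would argue by contrapositive, attempting to show that if $G$ is finitely generated Abelian but not torsion-free then no ring $R$ with $|R|\geq 2$ is $G$-central Armendariz. The natural steps are: from $T(G)\neq\{e\}$ pick an element $g$ of finite order $n\geq 2$, so that the finite cyclic group $N=\langle g\rangle\cong\mathbb{Z}/n\mathbb{Z}$ is a submonoid of $G$; if $R$ were $G$-central Armendariz then, by the lemma that being $M$-central Armendariz descends to submonoids, $R$ would be $N$-central Armendariz; and by the lemma that a noncommutative ring with $0\neq 1$ is not $M$-central Armendariz over a cyclic group of order $n\geq 2$, this would force $R$ to be commutative.

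The hard part — indeed the fatal one for the statement as worded — is precisely this last step. The cyclic-group lemma excludes only noncommutative witnesses and says nothing when $R$ is commutative; but a commutative ring with $|R|\geq 2$ is $G$-central Armendariz for every monoid $G$, so such a witness exists irrespective of torsion and the reverse implication cannot be proved as stated. Concretely, $G=\mathbb{Z}/2\mathbb{Z}$ is finitely generated Abelian and not torsion-free, yet $R=\mathbb{Z}_2$ has $|R|=2$ and is $G$-central Armendariz, contradicting the asserted ``only if.'' I would therefore flag this as the genuine obstacle: the displayed biconditional holds only after the existential witness is restricted to noncommutative rings, matching the hypothesis of the cyclic-group lemma, and I would not claim the equivalence in the form as worded.
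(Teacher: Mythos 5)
You are right, and your diagnosis is the substantive content here: the proposition as printed is false, and the paper's own ``proof'' does not rescue it. The paper's entire argument is the citation \cite[Theorem 1.14]{L2}, but Liu's theorem concerns $G$-\emph{Armendariz} rings, not $G$-central Armendariz rings. That citation does yield the forward implication (a torsion-free finitely generated Abelian group admits a $G$-Armendariz ring with $|R|\geq 2$, and every $G$-Armendariz ring is $G$-central Armendariz), though your direct witness $R=\mathbb{Z}_2$ is simpler still. The reverse implication does not transfer, for exactly the reason you give: by the paper's own Remark 2.2(2), every commutative ring is $M$-central Armendariz for \emph{every} monoid $M$, so $R=\mathbb{Z}_2$ witnesses the existential statement for $G=\mathbb{Z}/2\mathbb{Z}$, which is finitely generated Abelian and not torsion-free. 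Hence the ``only if'' fails as worded, and the one-line citation silently substitutes ``central Armendariz'' for ``Armendariz'' at precisely the point where the substitution breaks the theorem.

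Your proposed repair is also the correct one and is fully supported by the paper's own machinery: if the existential is restricted to noncommutative rings, the contrapositive runs, as you outline, through the submonoid lemma (descent from $G$ to the cyclic subgroup $\langle g\rangle$ generated by a torsion element) and the cyclic-group lemma, which forces any $\langle g\rangle$-central Armendariz ring with $0\neq 1$ to be commutative. For the forward direction of the repaired statement a noncommutative witness exists: $G\cong\mathbb{Z}^n$ carries a compatible strict total order and is therefore a u.p.-monoid, so any noncommutative reduced ring (e.g.\ the real quaternions), being central reduced, is $G$-central Armendariz by the paper's theorem on central reduced rings over u.p.-monoids. So your proposal is correct where the paper is not; only the easy direction of the stated equivalence survives.
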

\begin{proof} See \cite[Theorem 1.14]{L2}.
\end{proof}

In \cite{Kap}, Baer rings are introduced as rings in which the
right (left) annihilator of every nonempty subset is generated by
an idempotent. We end this paper with some applications of
$M$-central Armendariz rings to   show  there is a strong
connection between Baer and  p.p.-rings with their monoid  rings.

\begin{thm}\label{T3} Let $M$ be a strictly totally ordered monoid with $|M|\geq
2$ and $R$  an $M$-central Armendariz ring. Then $R$ is right
p.p.-ring if and only if $R[M]$ is right p.p.-ring.
\end{thm}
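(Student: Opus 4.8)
The plan is to prove the two implications separately, drawing on Theorem~\ref{T1} and Proposition~\ref{P1} for the forward direction and on the augmentation homomorphism of the monoid ring for the converse. Throughout I keep the convention that $e$ denotes the identity of $M$, so idempotents of $R$ will be written with other letters.

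First I would treat the implication that $R$ right p.p.\ forces $R[M]$ right p.p. Since $R$ is right p.p.\ and $M$-central Armendariz with $M$ strictly totally ordered, Theorem~\ref{T1} gives that $R$ is $M$-Armendariz, and Proposition~\ref{P1} gives that $R$ is Abelian, so every idempotent of $R$ is central. Given $\alpha=a_1g_1+\cdots+a_mg_m\in R[M]$, I would write $r_R(a_i)=u_iR$ with each $u_i$ a central idempotent and set $u=u_1u_2\cdots u_m$, again a central idempotent satisfying $uR=\bigcap_i u_iR=\bigcap_i r_R(a_i)$. The goal is $r_{R[M]}(\alpha)=uR[M]$. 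The inclusion $uR[M]\subseteq r_{R[M]}(\alpha)$ is immediate, since $a_iu=0$ for every $i$ (as $u\in u_iR=r_R(a_i)$ and $u$ is central), hence $\alpha u=0$ and $\alpha(u\gamma)=(\alpha u)\gamma=0$ for all $\gamma$. For the reverse inclusion, if $\beta=\sum_j b_jh_j$ satisfies $\alpha\beta=0$, then the $M$-Armendariz property yields $a_ib_j=0$ for all $i,j$, so each $b_j\in\bigcap_i r_R(a_i)=uR$, whence $ub_j=b_j$ and $\beta=u\beta\in uR[M]$. Thus $r_{R[M]}(\alpha)$ is generated by the idempotent $u\in R\subseteq R[M]$, and $R[M]$ is right p.p.

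For the converse I would use the augmentation homomorphism $\pi\colon R[M]\to R$, $\sum_i a_ig_i\mapsto\sum_i a_i$, which is a surjective ring homomorphism induced by collapsing $M$ to the identity of $(R,\cdot)$. For a fixed $a\in R$, a direct coefficientwise computation shows $r_{R[M]}(ae)=r_R(a)[M]$, the set of elements of $R[M]$ all of whose coefficients lie in $r_R(a)$. Since $R[M]$ is right p.p., this annihilator equals $\varepsilon R[M]$ for some idempotent $\varepsilon\in R[M]$. Writing $f=\pi(\varepsilon)$, which is an idempotent of $R$ because $\pi$ is a ring homomorphism, and applying $\pi$ to the identity $\varepsilon R[M]=r_R(a)[M]$, I get $fR=\pi(\varepsilon R[M])=\pi\bigl(r_R(a)[M]\bigr)=r_R(a)$, using surjectivity of $\pi$ on the left side and $\pi(r_R(a)[M])=r_R(a)$ on the right. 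Hence $r_R(a)$ is generated by the idempotent $f$, so $R$ is right p.p.

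The step I expect to be the main obstacle is the forward direction, precisely the passage from $\alpha\beta=0$ to the coefficient relations $a_ib_j=0$: this is exactly where the $M$-Armendariz property is indispensable, and hence where Theorem~\ref{T1} (and through it both the right p.p.\ hypothesis on $R$ and the strict total order on $M$) enters, together with the Abelian conclusion of Proposition~\ref{P1}, which is what allows the finitely many idempotents $u_i$ to commute so that their product still generates the intersection of the annihilators. By contrast, the converse reduces entirely to the formal behaviour of the augmentation map and needs no such structural input, which explains the asymmetry between the two halves of the argument.
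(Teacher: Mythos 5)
Your proof is correct, but its second half takes a genuinely different route from the paper's. In the forward direction you and the paper agree in substance: both invoke Theorem \ref{T1} to upgrade $M$-central Armendariz to $M$-Armendariz; the paper then simply cites \cite[Theorem 3.4]{L2} for the transfer of the right p.p.\ property to $R[M]$, whereas you reprove that transfer directly (Abelianness from Proposition \ref{P1} makes the idempotents $u_i$ central, so $u=u_1\cdots u_m$ is an idempotent with $uR=\bigcap_i r_R(a_i)$, and the $M$-Armendariz condition identifies $r_{R[M]}(\alpha)$ with $uR[M]$); this is the standard argument behind Liu's theorem and is sound. The real divergence is in the converse. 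The paper writes the idempotent generator of $r_{R[M]}(a)$ as $f=f_1g_1+\cdots+f_mg_m$ with $g_1<\cdots<g_m$, uses the strict total order to see that $g_1^2$ factors uniquely over the support, deduces $f_1^2=f_1$, and then compares coefficients to conclude $r_R(a)=f_1R$. You instead apply the augmentation homomorphism $\pi$ to the identity $\varepsilon R[M]=r_{R[M]}(ae)=r_R(a)[M]$, obtaining $r_R(a)=\pi(\varepsilon)R$ in one stroke. Your version is shorter and strictly more general: it uses neither the order on $M$ nor the $M$-central Armendariz hypothesis, so it actually shows that for an arbitrary monoid $M$ and arbitrary ring $R$, if $R[M]$ is right p.p.\ then so is $R$; the paper's coefficient-extraction argument also avoids the Armendariz hypothesis but is genuinely tied to the ordered structure, and it must handle the delicate point that the minimal support element $g_1$ need not be the identity of $M$ --- a subtlety your homomorphism argument bypasses entirely. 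The only thing the paper's method buys in exchange is stylistic consistency, since it stays within the coefficient-comparison technique used throughout the paper.
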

\begin{proof} Let $R$ be a right p.p.-ring. By Theorem \ref{T1}, $R$
is $M$-Armendariz, because $R$ is $M$-central Armendariz. Hence
$R[M]$ is a right p.p.-ring by \cite[Theorem 3.4]{L2}.

Conversely, let $R[M]$ be a right p.p.-ring and $a\in R$. Then
there exists an idempotent $f=f_1g_1+f_2g_2+...+f_mg_m\in R[M]$
such that ${\rm r}_{R[M]}(a)=fR[M]$. We can suppose that
$g_1<g_2<...<g_m$. If there exist $1\leq i,j \leq m$ such that
$g_ig_j=g_1^2$, then $g_1\leq g_i$ and $g_1\leq g_j$. If $g_1< g_i
$, then $g_1^2<g_ig_1\leq g_ig_j=g_1^2$, a contradiction. Thus
$g_1=g_i$. Similarly, $g_1=g_j$. Hence from $(1-f)f=0$, we have
$(1-f_1)f_1=0$. Therefore $f_1^2=f_1$. As $afR[M]=0$,
$af_1g_1+...+af_mg_m=0$. Since  $g_i\neq g_j$ for each $1\leq
i\neq j\leq m$, $af_1=0$. Thus $f_1R\subseteq {\rm r}_R(a)$. Now,
let $r\in {\rm r}_R(a)$. Then $r\in fR[M]$. This implies that
$(1-f)r=0$ and so $(1-f_1)r=0$. Hence $r\in f_1R$. Therefore
$r_R(a)=f_1R$. Thus $R$ is a right p.p.-ring.
\end{proof}

\begin{thm} Let $M$ be a strictly totally ordered monoid with $|M|\geq
2$ and $R$  an $M$-central Armendariz ring. Then $R$ is a Baer
 ring if and only if $R[M]$ is a  Baer ring.
\end{thm}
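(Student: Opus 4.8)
The plan is to route both implications through Theorem~\ref{T1}, exactly as Theorem~\ref{T3} did for the p.p.\ case. The crucial preliminary observation is that every Baer ring is a right p.p.-ring: the right annihilator of a single element is the right annihilator of a one-element subset, hence is generated by an idempotent. Consequently, whenever $R$ is Baer the hypotheses of Theorem~\ref{T1} are in force, and since $R$ is $M$-central Armendariz with $M$ strictly totally ordered, Theorem~\ref{T1} upgrades this to the statement that $R$ is $M$-Armendariz. This is what makes the coefficientwise machinery available.

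For the forward implication, assume $R$ is Baer, so $R$ is $M$-Armendariz by the previous paragraph. Given a nonempty subset $\mathcal{A}\subseteq R[M]$, let $C$ be the set of all coefficients occurring in the elements of $\mathcal{A}$. Using that $R$ is $M$-Armendariz, a product $\gamma\delta=0$ in $R[M]$ is equivalent to the vanishing of all coefficient products, so one checks the identity ${\rm r}_{R[M]}(\mathcal{A})={\rm r}_R(C)[M]$. Since $R$ is Baer, ${\rm r}_R(C)=eR$ for some idempotent $e\in R$, whence ${\rm r}_{R[M]}(\mathcal{A})=eR[M]$ is generated by the idempotent $e$ (viewed in $R[M]$ via the identity of $M$). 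Therefore $R[M]$ is Baer. This is precisely the Baer analogue of \cite[Theorem~3.4]{L2}, and one may alternatively quote that transfer theorem directly.

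For the converse, assume $R[M]$ is Baer and imitate the converse half of Theorem~\ref{T3}, replacing the single element by a subset. Let $A$ be a nonempty subset of $R$, regarded inside $R[M]$ through the identity $e$ of $M$. Since $R[M]$ is Baer, ${\rm r}_{R[M]}(A)=fR[M]$ for an idempotent $f=f_1g_1+\cdots+f_mg_m$ with $g_1<g_2<\cdots<g_m$. The strict order together with cancellativity forces $g_1^2$ to be attained only by the pair $(g_1,g_1)$, and the same lowest-degree coefficient comparison used in Theorem~\ref{T3} yields $f_1^2=f_1$. For each $a\in A$ the identity $af=0$ gives $af_1=0$ on the lowest term, so $f_1R\subseteq{\rm r}_R(A)$; conversely any $r\in{\rm r}_R(A)$ lies in $fR[M]$, so $(1-f)r=0$ and its lowest term gives $(1-f_1)r=0$, i.e.\ $r=f_1r\in f_1R$. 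Hence ${\rm r}_R(A)=f_1R$ is idempotent-generated and $R$ is Baer.

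The genuinely new content is confined to the forward direction's identity ${\rm r}_{R[M]}(\mathcal{A})={\rm r}_R(C)[M]$, which is exactly where the $M$-Armendariz property (obtained from Theorem~\ref{T1} via Baer~$\Rightarrow$~p.p.) is indispensable. I expect the converse to be routine once Theorem~\ref{T3} is in hand, since the lowest-degree bookkeeping transfers with $A$ in place of $a$; the main point requiring care is to run that bookkeeping uniformly over the whole set $A$ rather than over a single element.
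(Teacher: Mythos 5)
Your proposal is correct and follows essentially the same route as the paper: the forward direction passes through Theorem \ref{T1} (using Baer $\Rightarrow$ right p.p.) to get that $R$ is $M$-Armendariz and then transfers Baerness to $R[M]$ — the paper simply cites \cite[Theorem 3.5]{L2}, whose coefficient identity ${\rm r}_{R[M]}(\mathcal{A})={\rm r}_R(C)[M]$ you inline — while your converse reproduces the paper's argument verbatim, extracting the idempotent $f_1$ from the lowest term of the idempotent generating ${\rm r}_{R[M]}(A)$ and showing ${\rm r}_R(A)=f_1R$. There is no substantive difference between your proof and the paper's.
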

\begin{proof} Let $R$ be a Baer ring. By Theorem \ref{T1}, $R$
is $M$-Armendariz, because $R$ is $M$-central Armendariz. Hence
$R[M]$ is a Baer ring by \cite[Theorem 3.5]{L2}.

Conversely, let $R[M]$ be a Baer ring  and $W$ be a subset of $R$.
Since $R[M]$ is Baer,  there exists $e^2 = e = e_1g_1 + e_2g_2 +
. . . + e_mg_m \in  R[M]$ such that $r_{R[M]}(W) = eR[M]$.
Similar the proof of Theorem \ref{T3}, $e_1^2=e_1$. As $we=0$ for
each $w\in W$, $we_1=0$ for each $w\in W$. Therefore
$e_1R\subseteq r_R(W)$. Now, let $r\in r_R(W)$. Then from
$r_R(W)\subseteq r_{R[M]}(W) = eR[M$ we have $(1-e)r=0$. Hence
$r\in e_1R$. Thus $r_R(W)=e_1R$ and so $R$ is Baer.
\end{proof}
\section*{Acknowledgments}
This research was supported by Islamic Azad University, Bushehr
Branch.

\bibliographystyle{amsplain}

\begin{thebibliography}{5}



\bibitem{1} N. Agayev, G. G\"{u}ng\"{u}oro\u{g}lu, A. Harmanci and S. Halicio\u{g}lu,
Abelian modules, Acta Math. Univ. Comenianae 78 (2009), no. 2,
235-244.

\bibitem{N2} N. Agayev, G. G\"{u}ng\"{u}oro\u{g}lu, A. Harmanci and S.
Halicio\u{g}lu, Central Armendariz Rings, Bull. Malays. Math.
Sci. Soc. (2) 34(1) (2011), 137-145.

\bibitem{and}  D. D. Anderson and V. Camillo, Armendariz rings and
Gaussian rings, Comm. Algebra 26 (1998), no. 7, 2265-2272.

\bibitem{arm}  E. P. Armendariz, A note on extensions of Baer and p.p.-rings,
J. Austral. Math. Soc. 18 (1974), 470-473.

\bibitem{2}  G. F. Birkenmeier, J. Y. Kim and J. K. Park,
Principally quasi-Baer rings, Comm. Algebra 29 (2001), no. 2,
639-660.

\bibitem{Bir} G. F. Birkenmeier, J. K. Park,  Triangular matrix representations of ring extensions.
J. Alg. 265 (2003),457-477.

\bibitem{ghks} Sh. Ghalandarzadeh, H. Haj Seyyed Javadi, M. Khoramdel, M. Shamsaddini Fard, On
Armendariz Ideal, Bull. Korean Math. Soc. 47 (2010), no. 5,
883-888.


\bibitem{Hashemi} E. Hashemi, Quasi-Armendariz rings relative to a
monoid, Journal of Pure and Applied Algebra 211 (2007) 374-382.

\bibitem{Hashe} E. Hashemi,  McCoy Rings Relative to a Monoid, Comm.
Algebra 38 (2010), 1075-1083.


\bibitem{Hash} E. Hashemi, Nil-Armendariz Rings Relative to a
Monoid, Mediterr. J. Math. 10 (2013), 111-121.

\bibitem{Hi} Y. Hirano, Some Studies of Strongly $\pi$-Regular Rings, Math. J. Okayama Univ.
20(2)(1978), 141-149.


\bibitem{3} C. Huh, Y. Lee and A. Smoktunowicz, Armendariz rings and
semicommutative rings, Comm. Algebra 30 (2002), no. 2, 751-761.

\bibitem{Huh} C. Huh, H. K. Kim, Y. Lee, p.p. rings and generalized p.p.
rings, Journal of Pure and Applied Algebra 167 (2002), 37-52.

\bibitem{HJP} S. U. Hwang, C. H. Jeon and K. S. Park, A Generalization of Insertion of Factors
Property, Bull. Korean Math. Soc. 44(1)(2007), 87-94.


\bibitem{Kap} I. Kaplansky, Rings of Operators, W. A. Benjamin, Inc., New
York, 1968.


\bibitem{4} N. K. Kim and Y. Lee, Armendariz rings and reduced rings, J.
Algebra 223 (2000), no. 2, 477-488.


\bibitem{lee1} T. K. Lee and Y. Zhou, Armendariz and reduced rings, Comm.
Algebra 32 (2004), no. 6, 2287-2299.


\bibitem{5} Z. Liu and R. Zhao, On weak Armendariz rings, Comm. Algebra 34
(2006), no. 7, 2607-2616.

\bibitem{L2} Z. Liu, Armendariz rings relative to a monoid, Comm. Algebra 33(3)
(2005), 649-661.

\bibitem{rege}  M. B. Rege and S. Chhawchharia, Armendariz rings, Proc. Japan
Acad. Ser. A Math. Sci. 73 (1997), no. 1, 14-17.

\bibitem{Rib} P. Ribenboim, Noetherian rings of generalized power series, J. Pure Appl. Algebra
79 (1992) 293-312.

\bibitem{Ung} B. Ungor, S.  Halicioglu, H. Kose and A. Harmanci, (2013). Rings in which every nilpotent is central. arXiv preprint arXiv:1312.4024.

\bibitem{Wang} W. K. Wang,  Maximal Armendariz Subrings Relative to a Monoid of
Matrix Rings, Journal of Mathematical Research , Exposition, 30(5)
(2010),  883-890.

\bibitem{Zh}C. Zhang and J.  Chen, Weak $\alpha$-skew Armendariz
rings, J. Korean Math. Soc. 2010 Vol. 47, No. 3, 455—466.
\bibitem{Z} L.  Zhongkui and Z. Wenhui Quasi-Armendariz  rings  relative to  a
monoid, Communications in Algebra, 36 (2008), 928-947.



\end{thebibliography}

\bigskip
\bigskip

{\footnotesize {\bf Zakaria Sharifi};

 {Department of Mathematics},

{Bushehr Branch, Islamic Azad University} {Bushehr, Iran.}

 {\tt Email:
sharifizakaria@gmail.com}

\end{document}